\theoremstyle{plain}
\newtheorem*{theorem*}{Theorem}
\newtheorem{theorem}{Theorem}[section]
\newtheorem{proposition}[theorem]{Proposition}
\newtheorem*{proposition*}{Proposition}
\newtheorem*{mainT}{Main Theorem}
\newtheorem{corollary}[theorem]{Corollary}
\newtheorem*{corollary*}{Corollary}
\newtheorem{lemma}[theorem]{Lemma}
\newtheorem*{lemma*}{Lemma}
\theoremstyle{remark}
\theoremstyle{definition}
\newtheorem{remark}[theorem]{Remark}
\newtheorem*{remark*}{Remark}
\theoremstyle{definition}
\newtheorem{definition}[theorem]{Definition}
\newtheorem*{definition*}{Definition}
\begin{document}

\newcommand{\lil}{\lambda\in \Lambda}
\newcommand{\D}{\mathbb{D}}
\newcommand{\C}{\mathbb{C}}
\newcommand{\N}{\mathcal{N}}
\newcommand{\R}{\mathbb{R}}
\newcommand{\dist}{\operatorname{dist}}
\newcommand{\Int}{\operatorname{Int}}
\newcommand{\Hol}{\operatorname{Hol}}
\newcommand{\Har}{\operatorname{Har}}
\renewcommand{\Re}{\mbox{Re}}
\renewcommand{\Im}{\mbox{Im}}

\renewcommand{\qedsymbol}{$\blacksquare$}

\title{Traces of the Nevanlinna class on discrete sequences}

\author{A. Hartmann, X. Massaneda, A. Nicolau }

\address{A.Hartmann:   Universit\'e de Bordeaux\\
 IMB\\ 351 cours de la Lib\'era\-tion\\ 33405 Talence\\ France}
 \email{Andreas.Hartmann@math.u-bordeaux1.fr}  

\address{X.Massaneda: Universitat  de Barcelona\\
Departament de Matem\`a\-tiques i Inform\`atica and BGSMath \\
Gran Via 585, 08007-Bar\-ce\-lo\-na\\ Spain}
\email{xavier.massaneda@ub.edu}

\address{A.Nicolau: Universitat Aut\`onoma de Barcelona\\
Departament de Matem\`a\-tiques\\
Edifici C, 08193-Bellaterra\\ Spain}
\email{artur@mat.uab.cat}

\subjclass[2000]{30D55,30E05,42A85}

\thanks{Second and third authors supported by the Generalitat de Catalunya (grants 2014 SGR 289 and 2014SGR 75) and the Spanish Ministerio de Ciencia e Innovaci\'on (projects MTM2014-51834-P and  MTM2014-51824-P)}

\date{\today}

\keywords{Interpolating sequences, Nevanlinna class, Divided differences}

\begin{abstract} 
We show that a discrete sequence $\Lambda$ of the unit disk is the union of
$n$ interpolating sequences for the Nevanlinna class $\N$ if and only if
the trace of  $\N$ on $\Lambda$ coincides with the space of functions on
$\Lambda$ for which the  divided differences of order $n-1$ are uniformly
controlled by a positive harmonic function. 
\end{abstract}

\maketitle

\section{Definitions and statement}
This note deals with some properties of the classical
\emph{Nevanlinna class} consisting of the holomorphic functions in the unit disk $\D$ for which $\log_+|f|$ has a positive harmonic majorant. We denote by $\Har_+(\D)$ the set of non-negative harmonic functions in $\D$. Equivalently, 
\[
\N=\bigl\{f\in \Hol(\D):\lim_{r \to 1}\frac{1}{2\pi}\int_{0}^{2\pi}
       \log^+|f(re^{i\theta})|\;d\theta<\infty\bigr\}.
\]

\begin{definition*}
A discrete sequence of points $\Lambda$ in $\D$ is called \emph{interpolating for} $N$ (denoted $\Lambda\in\Int \N$) if the trace space $N|\Lambda$ is ideal, or equivalently, if for every $v\in\ell^\infty$ there exists $f\in \N$ such that 
\[
 f(\lambda_n)=v_n,\quad n\in\mathbb N.
\]
\end{definition*}

Interpolating sequences for the Nevanlinna class were first investigated by Naftalevitch \cite{Naft}. A rather complete study was carried out much later in \cite{HMNT}. Let $B$ denote the Blaschke product associated to a Blaschke sequence $\Lambda$. Let 
\[
 b_\lambda(z)=\frac{z-\lambda}{1-\bar\lambda z}\quad\textrm{and}\quad B_\lambda(z)=\frac{B(z)}{b_\lambda(z)}\ .
\]
Let's also consider the pseudohyperbolic distance in $\D$, defined as
\[
 \rho(z,w)=\left|\frac{z-w}{1-\bar z w}\right|\ ,
\]
and the corresponding pseudohyperbolic disks $D(z,r)=\{w\in\D : \rho(z,w)<r\}$.

According to \cite[Theorem 1.2]{HMNT}  $\Lambda\in\Int \N$ if and only if there exists $H\in\Har_+(\D)$ such that 
\begin{equation}\label{intN}
 |B_\lambda(\lambda)|=(1-|\lambda|)|B'(\lambda)|\geq e^{-H(\lambda)},\quad \lil\ .
\end{equation}
Moreover in such case the trace space is
\[
 \N (\Lambda)=\left\{\{\omega(\lambda)\}_{\lil}:\exists H\in \Har_+(\D)\ ,\, \log_+|\omega(\lambda)| \le H(\lambda),\ \lambda\in \Lambda\right\}.
\]
Other properties and characterizations of Nevanlinna interpolating sequences have been given recently in \cite{HMN}.
In these terms $\Lambda\in\Int \N$  when for
every sequence $\omega(\Lambda)\in \N(\Lambda)$ there exists $f\in \N$ such
that $f(\lambda)=\omega(\lambda)$, $\lambda\in\Lambda$. In terms of the
restriction operator 
\[ 
\begin{split}
\mathcal R_\Lambda: \N & \longrightarrow \N (\Lambda)\\
 f\, & \mapsto\; \{{f(\lambda)}\}_{\lambda\in \Lambda},
\end{split} 
\]
$\Lambda$ is  interpolating when 
$\mathcal R_\Lambda(\N)=\N(\Lambda)$.

\begin{definition}
Let $\Lambda$ be a discrete sequence in $\D$ and $\omega$ a function given on
$\Lambda$. The \emph{pseudohyperbolic divided differences of  $\omega$} are defined by induction
as follows 
\[
\begin{split}
\Delta^0 \omega(\lambda_1)  &=\omega(\lambda_1)\ ,\\
\Delta^j\omega(\lambda_1,\ldots,\lambda_{j+1}) 
&=\displaystyle\frac{\Delta^{j-1}\omega(\lambda_2,\ldots,\lambda_{j+1})-\Delta^{
j-1}\omega(\lambda_1,\ldots,\lambda_j)}{b_{\lambda_1}(\lambda_{j+1})}\qquad j\geq
1.\\
\end{split}
\]

For any $n\in \mathbb N$, denote 
\[
\Lambda^n=\{(\lambda_1,\ldots,\lambda_n)\in
\Lambda\times\stackrel{\stackrel{n}{\smile}}{\cdots}\times \Lambda\; :\; 
\lambda_j\not=\lambda_k\ \textrm{if}\  j\not=k\},
\] 
and consider the set $X^{n-1}(\Lambda)$ consisting of the
functions defined in $\Lambda$ with divided differences of order $n-1$ uniformly
controlled by a positive harmonic function $H$ i.e., such that for some $H\in\Har_+(\D)$,
\[
\sup_{(\lambda_1,\ldots,\lambda_n)\in \Lambda^n} \vert
\Delta^{n-1}\omega(\lambda_1,\ldots,\lambda_n)\vert
e^{-[H(\lambda_1)+\cdots+H(\lambda_n)]}<+\infty\ .
\]
\end{definition}

\begin{lemma}\label{inclusions}
Let $n\in\mathbb N$.  For any sequence $\Lambda \subset \D$, we have $X^n(\Lambda)\subset X^{n-1}(\Lambda)\subset\cdots\subset
X^0(\Lambda)=\N(\Lambda)$.
\end{lemma}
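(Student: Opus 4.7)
The base inclusion $X^0(\Lambda)=\N(\Lambda)$ is direct from the definitions: $\omega\in X^0(\Lambda)$ means $|\omega(\lambda)|\leq C\,e^{H(\lambda)}$ for some $H\in\Har_+(\D)$ and $C>0$, which is equivalent to $\log_+|\omega(\lambda)|\leq H(\lambda)+\log_+C$; since $H+\log_+C\in\Har_+(\D)$, this is exactly the description of $\N(\Lambda)$ recalled above.

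For the inductive step $X^k(\Lambda)\subset X^{k-1}(\Lambda)$ with $k\geq 1$, the plan is to exploit the defining recursion of the divided differences. After relabelling arguments it may be rewritten as
\[
\Delta^{k-1}\omega(\lambda_1,\ldots,\lambda_k) = \Delta^{k-1}\omega(\mu,\lambda_1,\ldots,\lambda_{k-1}) + b_\mu(\lambda_k)\,\Delta^k\omega(\mu,\lambda_1,\ldots,\lambda_k),
\]
valid for any $\mu\in\Lambda\setminus\{\lambda_1,\ldots,\lambda_k\}$. Since $|b_\mu(\lambda_k)|\leq 1$ and the hypothesis $\omega\in X^k(\Lambda)$ forces $|\Delta^k\omega(\mu,\lambda_1,\ldots,\lambda_k)|\leq C\,e^{H(\mu)+H(\lambda_1)+\cdots+H(\lambda_k)}$, each application of the identity trades one $\lambda$-entry for a fresh reference point at the cost of an exponentially controlled error.

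Iterating the identity $k$ times produces a tuple consisting entirely of reference points. To keep the error constants uniform in $(\lambda_1,\ldots,\lambda_k)$, I would fix once and for all a pool $M\subset\Lambda$ of $2k$ points (the statement is trivial when $|\Lambda|<2k$). For any $(\lambda_1,\ldots,\lambda_k)\in\Lambda^k$, the set $M\setminus\{\lambda_1,\ldots,\lambda_k\}$ contains at least $k$ elements, from which one may successively select pairwise distinct $\mu^{(1)},\ldots,\mu^{(k)}$ compatible with each step's distinctness requirement. Summing the telescoped errors and using $H\geq 0$, one arrives at
\[
|\Delta^{k-1}\omega(\lambda_1,\ldots,\lambda_k)| \leq A + B\,e^{H(\lambda_1)+\cdots+H(\lambda_k)},
\]
where $A$ is a maximum of the finitely many $\Delta^{k-1}$-values on tuples drawn from $M$, and $B$ depends only on $C$, $k$, and $\sum_{m\in M}H(m)$. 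This is precisely the finiteness required for $\omega\in X^{k-1}(\Lambda)$, with the same $H$ up to an additive constant absorbed into the majorant.

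The main obstacle is not analytic but combinatorial: one must manage the iteration so that every intermediate tuple lies in the appropriate $\Lambda^j$ (all entries distinct), and simultaneously keep the reference constants independent of the particular $(\lambda_1,\ldots,\lambda_k)$. Fixing the auxiliary pool $M$ of size $2k$ handles both requirements uniformly; once this is set up, the estimate is just a bookkeeping of $k$ applications of the trivial bound $|b_\mu(\cdot)|\leq 1$ together with the $X^k$-hypothesis.
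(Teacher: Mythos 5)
Your proof is correct and follows essentially the same route as the paper: both telescope $\Delta^{k-1}\omega(\lambda_1,\ldots,\lambda_k)$ by swapping the variable points one at a time for reference points drawn from a fixed finite subset of $\Lambda$, bound each error term via the $X^k$ hypothesis together with $|b_\mu|\leq 1$ and $H\geq 0$, and absorb the finitely many reference contributions into a constant. Your explicit pool $M$ of $2k$ points is just a slightly more careful packaging of the paper's choice of ``the first $n$ points of $\Lambda$ different from all $\lambda_j$''.
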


\begin{proof}
Assume that $\omega(\Lambda)\in X^n(\Lambda)$, that is, 
\begin{multline*}
\sup_{(\lambda_1,\dots,\lambda_{n+1})\in\Lambda^{n+1}}
 \left|\frac{\Delta^{n-1}\omega(\lambda_2,\dots,\lambda_{n+1})-\Delta^{n-1}
\omega(\lambda_1,\dots,\lambda_{n})}{b_{\lambda_1}(\lambda_{n+1})}\right|
 e^{-[H(\lambda_1)+\cdots+H(\lambda_{n+1})]}<\infty\ .
\end{multline*}
Then, given $(\lambda_1,\dots,\lambda_{n})\in\Lambda^{n}$ and taking
$\lambda_1^0,\dots,\lambda_n^0$ from a finite set (for instance the $n$ first
$\lambda^0_j\in\Lambda$ different of all $\lambda_j$) we have
\begin{multline*}
\Delta^{n-1}\omega(\lambda_1,\dots,\lambda_{n})=
\frac{\Delta^{n-1}\omega(\lambda_1,\dots,\lambda_{n})-\Delta^{n-1}
\omega(\lambda^0_1,\lambda_1,\dots,\lambda_{n-1})}{b_{\lambda_1^0}(\lambda_{n})} b_{\lambda_1^0}(\lambda_{n})+\\
+\frac{\Delta^{n-1} \omega(\lambda^0_1,\lambda_1,\dots,\lambda_{n-1})-\Delta^{n-1} \omega(\lambda^0_2
,\lambda^0_1,\dots,\lambda_{n-2})}{b_{\lambda_2^0}(\lambda_{n-1})}b_{\lambda_2^0}(\lambda_{n-1})+\cdots+\\
\frac{\Delta^{n-1}\omega(\lambda^0_{n-1},\dots,\lambda^0_1,\lambda_{1})-\Delta^{
n-1}\omega(\lambda^0_n,\dots,\lambda^0_1)}{b_{\lambda_n^0}(\lambda_{1})}b_{\lambda_n^0}(\lambda_{1})+
\Delta^{n-1}\omega(\lambda^0_{n},\dots,\lambda^0_1)
\end{multline*}
Since $\omega\in X^{n-1}(\Lambda)$ there exists $H\in\Har_+(\D)$ and a constant $K(\lambda^0_1,\dots,\lambda^0_n)$ such that
\begin{multline*}
\left|\Delta^{n-1}\omega(\lambda_1,\dots,\lambda_{n})\right|\leq  
e^{H(\lambda^0_1)+H(\lambda_1)\cdots+H(\lambda_n)} \rho(\lambda^0_1,\lambda_n)+ 
e^{H(\lambda^0_1)+ H(\lambda_2^0)\cdots+H(\lambda_{n-1})} \rho(\lambda^0_2,\lambda_{n-1})+\\
+\cdots 
+e^{H(\lambda^0_{1})+\cdots+H(\lambda^0_{n})+H(\lambda_{1})}\rho(\lambda^0_n,\lambda_{1})+\Delta^{n-1}\omega(\lambda^0_n,\dots,\lambda^0_1)\\
\leq K(\lambda^0_1,\dots,\lambda^0_n)\, e^{H(\lambda_1)+\cdots+H(\lambda_n)},
\end{multline*}
and the statement follows.
\end{proof}

 The main result of this note is modelled after Vasyunin's description of the
sequences $\Lambda$ in $\D$ such that the trace of the algebra of
bounded holomorphic functions $H^\infty$ on $\Lambda$ equals the space of
pseudohyperbolic divided differences of order $n$ (see \cite{Vas83}, \cite{Vas84}).
Similar results hold also for Hardy spaces (see \cite{BNO} and \cite{H}) and the H\"ormander algebras, both in $\C$ and in $\D$ \cite{MOO}.
The analogue in our context is the following.

\begin{mainT} The identity
$\N| \Lambda= X^{n-1}(\Lambda)$ holds if and only if $\Lambda$ is
the union of $n$ interpolating sequences for $\N$.
\end{mainT}

\section{General properties}

Throughout the proofs we will use repeatedly the well-known \emph{Harnack inequalities}: for $H\in\Har_+(\mathbb D)$ and $z,w\in\mathbb \D$,
\[
 \frac{1-\rho(z,w)}{1+\rho(z,w)}\leq\frac{H(z)}{H(w)}\leq\frac{1+\rho(z,w)}{1-\rho(z,w)}\ .
\]
We shall always assume, without loss of generality, that $H\in\Har_+(\mathbb D)$ is big enough so that for  $z\in D(\lambda, e^{-H(\lambda)})$ the inequalities
$1/2\leq H(z)/H(\lambda)\leq 2$ hold. Actually it is sufficient to assume $\inf\{H(z): z \in \D \} \geq \log 3$.

We begin by showing that one of the inclusions of the Main Theorem is inmediate. 

\begin{proposition}\label{incl}
For all $n\in\mathbb N$,  the inclusion $\N| \Lambda\subset X^{n-1}(\Lambda)$ holds.
\end{proposition}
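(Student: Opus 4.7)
My plan is to establish the inclusion by induction on $n$. The base case $n=1$ is immediate: if $f\in\N$ with positive harmonic majorant $U\in\Har_+(\D)$ of $\log^+|f|$, then $|\Delta^0 f(\lambda)|=|f(\lambda)|\le e^{U(\lambda)}$, placing $\{f(\lambda)\}_{\lil}$ in $X^0(\Lambda)=\N(\Lambda)$ with $H=U$.

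For the inductive step I would invoke the recursion
\[
\Delta^{n-1} f(\lambda_1,\dots,\lambda_n)=\frac{\Delta^{n-2} f(\lambda_2,\dots,\lambda_n)-\Delta^{n-2} f(\lambda_1,\dots,\lambda_{n-1})}{b_{\lambda_1}(\lambda_n)},
\]
and split the argument into two regimes according to $\rho(\lambda_1,\lambda_n)$. When $\rho(\lambda_1,\lambda_n)\ge 1/2$, the denominator is bounded below by $1/2$, so the triangle inequality together with the inductive bound on $\Delta^{n-2}$ yields the required estimate after inflating the harmonic majorant by a bounded constant factor. When $\rho(\lambda_1,\lambda_n)<1/2$, the denominator may be arbitrarily small and cancellation in the numerator must be exploited. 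On the pseudohyperbolic ball $D(\lambda_1,3/4)$, which contains both $\lambda_1$ and $\lambda_n$, Harnack's inequality gives $|f|\le e^{cU(\lambda_1)}$, so the normalised function $f/e^{cU(\lambda_1)}$ lies in the unit ball of $H^\infty$ on $D(\lambda_1,3/4)$. The classical Schur/Schwarz--Pick bound for $H^\infty$, transferred through the M\"obius map $\D\to D(\lambda_1,3/4)$ sending $0$ to $\lambda_1$, then controls the pseudohyperbolic difference quotient between $\lambda_1$ and $\lambda_n$ without the small factor $b_{\lambda_1}(\lambda_n)$ in the denominator.

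The main obstacle is to produce a \emph{single} positive harmonic function $H\in\Har_+(\D)$ that simultaneously absorbs the bound $|f|\le e^{U}$ coming from $f\in\N$ and the residual contributions $\log\bigl(1/|b_{\lambda_i}(\lambda_j)|\bigr)$ arising from small pseudohyperbolic distances between points of $\Lambda$. Because $\Lambda$ is discrete, two distinct points of $\Lambda$ can be pseudohyperbolically close only when both lie near $\partial\D$; this makes it possible to take $H$ of the form $H=cU+H_\Lambda$, where $H_\Lambda\in\Har_+(\D)$ is a Poisson integral associated with the accumulation set of $\Lambda$ on $\partial\D$ and grows fast enough near the boundary clusters of $\Lambda$ to dominate any divided-difference blow-up produced by small denominators in the recursion. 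A minor additional subtlety is that pseudohyperbolic divided differences are permutation-invariant only up to unimodular factors, so the two terms $\Delta^{n-2} f(\lambda_2,\dots,\lambda_n)$ and $\Delta^{n-2} f(\lambda_1,\dots,\lambda_{n-1})$ in the recursion cannot be identified as values of a single holomorphic function at two points; this forces a small bookkeeping when combining the inductive estimates for different orderings of the arguments, but is harmless because only $|\Delta^{n-1}f|$ enters the definition of $X^{n-1}(\Lambda)$.
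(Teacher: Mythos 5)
Your base case and the regime $\rho(\lambda_1,\lambda_n)\ge 1/2$ coincide with the paper's argument, but the close-points regime is where all the content lies, and there your plan does not go through. The Schwarz--Pick bound applied to $f$ on $D(\lambda_1,3/4)$ controls only the \emph{first} divided difference; at the inductive step the numerator of $\Delta^{n-1}f$ is a difference of two divided differences of order $n-2$, not a difference of two values of $f$, so normalizing $f$ into the unit ball of $H^\infty$ of a sub-disk says nothing about it. The paper proceeds differently: it proves the estimate for \emph{all} tuples in $\D^j$ (not merely on $\Lambda$), fixes $z_1,\dots,z_j$, regards $\Delta^j f(z_1,\dots,z_j,\cdot)$ as a function of the last variable alone, and in the case $\rho(z_1,z_{j+1})\le 1/2$ invokes the maximum principle over the circle $\rho(\xi,z_{j+1})=1/2$, where the far-apart estimate applies, and then Harnack. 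In particular no permutation of the arguments is ever performed, so the unimodular-factor problem you mention is avoided rather than ``bookkept away''; and it cannot be bookkept away, because the cancellation you need takes place in the numerator \emph{before} moduli are taken, so the observation that only $\vert\Delta^{n-1}f\vert$ enters the definition of $X^{n-1}(\Lambda)$ does not neutralize the phase discrepancy between $\Delta^{n-2}f(\lambda_2,\dots,\lambda_n)$ and a value of a single holomorphic function of one variable.

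The decisive flaw is your fallback step: you propose to absorb the residual contributions $\log\bigl(1/|b_{\lambda_i}(\lambda_j)|\bigr)$ coming from close points of $\Lambda$ into a positive harmonic function $H_\Lambda$ attached to the accumulation set of $\Lambda$. No such $H_\Lambda$ exists in general: by Harnack every $H\in\Har_+(\D)$ satisfies $H(z)\le\frac{1+|z|}{1-|z|}\,H(0)$, so it grows at most like $(1-|z|)^{-1}$, whereas a discrete sequence may contain pairs $\lambda_k,\lambda_k'$ with, say, $\rho(\lambda_k,\lambda_k')=\exp\bigl(-k/(1-|\lambda_k|)\bigr)$, for which $\log\bigl(1/\rho(\lambda_k,\lambda_k')\bigr)$ eventually dominates every positive harmonic function. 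The existence of a harmonic majorant of this kind is precisely the weak separation property, which the proposition cannot assume: in the necessity part of the Main Theorem the proposition is used exactly in order to deduce, via Lemma~\ref{equiv}, that $\Lambda$ is a union of weakly separated sequences. So this part of your plan is a wrong approach rather than a missing detail: the proof must eliminate the small denominators entirely through a cancellation mechanism at every order of the induction (this is what the paper's maximum-principle step is designed to do), and your proposal supplies such a mechanism only at order one.
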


\begin{proof}
Let $f\in \N$. Let us show  by induction on $j\ge1$ that there exists $H\in\Har_+(\D)$ such that
\[ 
\vert \Delta^{j-1} f(z_1,\ldots,z_j)\vert \le 
e^{H(z_1)+\cdots+H(z_j)}\qquad\textrm{for all $(z_1,\ldots,z_j)\in \D^j$}.
\]

As $f\in \N$, there exists $H\in\Har_+(\D)$ such that $\vert \Delta^0 f(z_1)\vert =\vert f(z_1)\vert \le 
e^{H(z_1)}$, $z_1\in\D$.

Assume that the property is true for $j$ and let $(z_1,\ldots,z_{j+1})\in
\D^{j+1}$. Fix $z_1,\ldots,z_j$ and consider $z_{j+1}$ as the variable in the
function
\[
\Delta^j f(z_1,\ldots, z_{j+1}) =\frac{\Delta^{j-1}
f(z_2,\ldots,z_{j+1})-\Delta^{j-1}f(z_1,\ldots,z_j)}{b_{z_1}(z_{j+1})}.
\]
By the induction hypothesis, there exists $H\in\Har_+(\D)$ such that
\[
\vert \Delta^{j} f(z_1, \ldots,z_{j+1})\vert \le\frac 1{\rho(z_1,z_{j+1})}
 \bigl(e^{H(z_2)+\cdots+H(z_{j+1})}+e^{H(z_1)+\cdots+H(z_j)}\bigr) .
\]

If $\rho(z_1,z_{j+1})\geq 1/2$ we get directly
\[
 \vert \Delta^{j} f(z_1, \ldots,z_{j+1})\vert \le  4 e^{H(z_1)+\cdots+H(z_{j+1})}\ ,
\]
and choosing for instance $\tilde H=H+\log 4$ we get the desired estimate.

If $\rho(z_1,z_{j+1})\leq 1/2$ we apply the maximum principle and Harnack's inequalities
\begin{align*}
\vert \Delta^j f(z_1,\ldots, z_{j+1})\vert  & 
\le \sup_{ \xi : \rho(\xi,z_{j+1}) =1/2} \vert \Delta^j f(z_1,\ldots, z_j, \xi_{j+1})\vert\\
&\leq
\sup_{\xi : \rho(\xi,z_{j+1}) =1/2} 4 e^{H(z_1)+\cdots+H(z_j)+H(\xi)}\\ 
&  \le 4e^{2[H(z_1)+\cdots+H(z_j)+H(z_{j+1})]}.
\end{align*}
Choosing here $\tilde H=2H+\log4$ we get the desired estimate.
\end{proof}

\begin{definition}
A sequence $\Lambda$ is \emph{weakly separated} if there exists $H\in\Har_+(\D)$
such that the disks $D(\lambda, e^{-H(\lambda)})$, $\lambda\in\Lambda$, are pairwise disjoint.
\end{definition}

\begin{remark}
If $\Lambda$ is weakly separated then $X^0(\Lambda)=X^n(\Lambda)$, for all $n\in\mathbb
N$. 

By Lemma~\ref{inclusions}, to see this it is enough to prove (by induction) that $X^0(\Lambda)\subset X^n(\Lambda)$ for all $n\in\mathbb N$.

For $n=0$ this is trivial. 

Assume now that $X^0(\Lambda)\subset X^{n-1}(\Lambda)$ and take
$\omega(\Lambda)\in X^0(\Lambda)$. Since $\rho(\lambda_1,\lambda_{n+1})\geq e^{-H_0(\lambda_1)}$ for some $H_0\in\Har_+(\D)$ we have
\begin{align*}
\left|\Delta^n\omega(\lambda_1,\dots,\lambda_{n+1})\right|&=
\left|\frac{\Delta^{n-1}\omega(\lambda_2,\dots,\lambda_{n+1})-\Delta^{n-1}\omega(\lambda_1,\dots,\lambda_n)}{b_{\lambda_1}(\lambda_{n+1})}\right|\\
&\leq e^{H_0(\lambda_1)} \left( e^{H(\lambda_2)+\cdots+H(\lambda_{n+1})} + e^{H(\lambda_1)+\cdots+H(\lambda_{n})}\right)
\end{align*}
for some $H \in \Har_+ (\D)$. Taking $\tilde {H}=H+H_0$ we are done.
\end{remark}

\begin{lemma}\label{equiv}
Let $n\geq 1$. The following assertions are equivalent:
\begin{itemize}
\item[(a)] $\Lambda$ is the union of $n$  weakly separated sequences,

\item[(b)] There exist $H\in\Har_+(\D)$ such that
\[
 \sup_{\lambda\in\Lambda} \# [\Lambda\cap D(\lambda, e^{-H(\lambda)})]\leq n\ .
\]

\item[(c)] $X^{n-1}(\Lambda)=X^n(\Lambda)$.
\end{itemize}

\end{lemma}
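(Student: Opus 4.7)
The plan is to establish the three-way equivalence via the cycle $(a) \Leftrightarrow (b)$, then $(a) \Rightarrow (c)$, and finally $(c) \Rightarrow (b)$ by contrapositive.

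The implication $(a) \Rightarrow (b)$ is essentially immediate: if $\Lambda = \bigcup_{j=1}^n \Lambda_j$ with each $\Lambda_j$ weakly separated by some $H_j \in \Har_+(\D)$, then $H := \sum_{j=1}^n H_j$ forces $D(\lambda, e^{-H(\lambda)}) \subset D(\lambda, e^{-H_j(\lambda)})$ for every $j$, so this disk meets each $\Lambda_j$ in at most the point $\lambda$ itself, giving at most $n$ points of $\Lambda$. For $(b) \Rightarrow (a)$, the idea is to enlarge $H$ using Harnack's inequalities so that the relation $\mu \in D(\lambda, e^{-H(\lambda)})$ is essentially symmetric in $\lambda$ and $\mu$; viewing this relation as the edge set of a graph on $\Lambda$, hypothesis (b) asserts that the maximum degree is at most $n-1$. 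A greedy $n$-coloring then partitions $\Lambda$ into $n$ independent subsets, each automatically weakly separated (with respect to this enlarged $H$).

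For $(a) \Rightarrow (c)$, Lemma \ref{inclusions} already provides $X^n(\Lambda) \subset X^{n-1}(\Lambda)$, so only the reverse inclusion needs proof. Given $\omega \in X^{n-1}(\Lambda)$ and $(\lambda_1, \ldots, \lambda_{n+1}) \in \Lambda^{n+1}$, pigeonhole yields two entries $\lambda_i, \lambda_k$ lying in the same $\Lambda_j$, and weak separation gives $\rho(\lambda_i, \lambda_k) \geq e^{-H(\lambda_i)}$. Using the permutation invariance (in modulus, up to bounded multiplicative factors) of pseudohyperbolic divided differences, one reduces to the case $i=1$, $k=n+1$; then the defining recursion together with the $X^{n-1}$ bound yields
$$|\Delta^n\omega(\lambda_1, \ldots, \lambda_{n+1})| \leq \frac{|\Delta^{n-1}\omega(\lambda_2, \ldots, \lambda_{n+1})| + |\Delta^{n-1}\omega(\lambda_1, \ldots, \lambda_n)|}{\rho(\lambda_1, \lambda_{n+1})} \leq 2 e^{2H(\lambda_1) + H(\lambda_2) + \cdots + H(\lambda_{n+1})},$$
placing $\omega$ in $X^n(\Lambda)$ with the larger majorant $\tilde H = 2H + \log 2$.

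The remaining implication $(c) \Rightarrow (b)$ is proved by contrapositive. Assuming (b) fails, apply the negation with $H = m H_0$ ($H_0 \in \Har_+(\D)$ fixed, e.g.\ $H_0 \equiv 1$) for each integer $m$ to produce clusters $C_m = \{\mu_0^{(m)}, \ldots, \mu_n^{(m)}\} \subset \Lambda$ of pseudohyperbolic diameter $\epsilon_m \leq 2 e^{-m H_0(\mu_0^{(m)})}$. After extracting a subsequence so that the clusters are pairwise pseudohyperbolically well-separated, define $\omega$ to vanish off $\bigcup_m C_m$ and, within $C_m$, take the Newton-form function that forces $\Delta^j\omega(\mu_0^{(m)}, \ldots, \mu_j^{(m)}) = 0$ for $0 \leq j < n$ while $\Delta^n\omega(\mu_0^{(m)}, \ldots, \mu_n^{(m)})$ is of order $1/\epsilon_m$; this forces $|\omega(\mu_n^{(m)})| \lesssim \epsilon_m^{n-1}$, and one checks that $|\Delta^{n-1}\omega|$ stays uniformly bounded on all $n$-subsets of $\Lambda$. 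Then $\omega \in X^{n-1}(\Lambda) \setminus X^n(\Lambda)$, contradicting (c). The main obstacle is this final construction: one must control divided differences on \emph{mixed} tuples (drawing points from distinct clusters, or including points outside all clusters), which demands that the extraction guarantee sufficient pseudohyperbolic separation between distinct $C_m$; and when the base points $\mu_0^{(m)}$ tend to $\partial \D$, the harmonic function $H$ in the failure of (b) must be chosen carefully so that the blow-up of $\Delta^n\omega$ defeats every possible positive harmonic majorant $H' \in \Har_+(\D)$, not merely the constant ones.
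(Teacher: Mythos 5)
Your handling of (a)$\Leftrightarrow$(b) and of the inclusion $X^{n-1}(\Lambda)\subset X^{n}(\Lambda)$ is sound, and your greedy $n$-coloring of the Harnack-symmetrized "closeness" graph is a legitimate (arguably cleaner) alternative to the paper's inductive splitting of the clusters in the proof of (b)$\Rightarrow$(a); proving (a)$\Rightarrow$(c) via pigeonhole instead of the paper's (b)$\Rightarrow$(c) is equally fine, and your appeal to permutation invariance of $|\Delta^{n}|$ is the same appeal the paper makes.

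The genuine gap is in (c)$\Rightarrow$(b). The negation of (b) says: \emph{for every} $H\in\Har_+(\D)$ there is \emph{some} $\lambda$ (depending on $H$) with more than $n$ points of $\Lambda$ in $D(\lambda,e^{-H(\lambda)})$. Testing this only against $H=mH_0$ with $H_0$ constant produces clusters $C_m$ of diameter $\epsilon_m\le 2e^{-m}$ located at points $\mu_0^{(m)}$ over which you have no control; if $\mu_0^{(m)}\to\partial\D$ fast enough, there is a genuinely non-constant $H'\in\Har_+(\D)$ (a suitable sum of Poisson kernels, say) with $H'(\mu_0^{(m)})\gtrsim m$, and then $e^{H'(\mu_0^{(m)})+\cdots}$ dominates $\Delta^n\omega\approx \epsilon_m^{-1}\approx e^{m}$, so your $\omega$ may perfectly well lie in $X^{n}(\Lambda)$ and no contradiction with (c) is obtained. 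You flag this yourself ("the harmonic function $H$ \dots must be chosen carefully so that the blow-up of $\Delta^n\omega$ defeats every possible positive harmonic majorant"), but you do not supply the mechanism, and that mechanism is the entire content of this implication. The paper's solution is an extremal selection made \emph{uniformly in $H$}: partition $\D$ into dyadic boxes $Q_{k,j}$, let $r_{k,j}$ be the smallest radius at which some point of $\Lambda\cap Q_{k,j}$ collects $n+1$ points of $\Lambda$, pick $\alpha_{k,j}$ realizing it, and prove the Claim that $\inf_{k,j} r_{k,j}\,e^{H(\alpha_{k,j})}=0$ for \emph{every} $H\in\Har_+(\D)$ (by feeding $\tilde H=\log(2/\eta)+4H$ back into the failure of (b) and using Harnack). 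Only because the chosen radii beat all harmonic majorants simultaneously does $\Delta^n\omega(\lambda_1^\alpha,\dots,\lambda_n^\alpha,\alpha)=r_\alpha^{-1}$ escape $X^n(\Lambda)$. Without an analogue of this selection your contrapositive does not close. A secondary, fixable issue: assigning Newton-form values at several points of each cluster makes the verification that $\omega\in X^{n-1}(\Lambda)$ on mixed tuples substantially harder; the paper avoids it by putting a single nonzero value per cluster, namely $\omega(\alpha)=\prod_{j=1}^{n-1}b_\alpha(\lambda_j^\alpha)$ at the selected centers and $0$ elsewhere, which makes the relevant $\Delta^{n-1}$ bounded by $1$ outright.
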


\begin{proof}
(a) $\Rightarrow$(b). This is clear, by the weak separation.
 
(b) $\Rightarrow$(a). We proceed by induction on $j=1,\ldots,n$. For $j=1$, it
is again clear by the definition of weak separation. Assume  the property true
for $j-1$. Let $H\in\Har_+(\D)$ , $\inf\{H(z): z \in \D \} \geq \log 3$, be such that
$\sup_{\lambda\in\Lambda} \# [\Lambda\cap D(\lambda, e^{-H(\lambda)})]\leq j$.
We split the sequence $\Lambda=\Lambda_a\cup\Lambda_b$ where
\begin{eqnarray*}
 \Lambda_a&=&\bigcup_{\{\lambda\in\Lambda:\#(\Lambda\cap D(\lambda,e^{-10H(\lambda)}))= j\}}
 (\Lambda\cap D(\lambda,e^{-10H(\lambda)}))\\
  \Lambda_b&=&\Lambda\setminus\Lambda_a  
\end{eqnarray*}
Now, for every $\lambda\in\Lambda_b$, we have $\#(\Lambda\cap D(\lambda,e^{-10H(\lambda)}))\le j-1$,
and by the induction hypothesis, $\Lambda_b$ splits into $j-1$ separated sequences $\Lambda_1,
\ldots,\Lambda_{j-1}$. 


In the case $\lambda\in\Lambda_a$, there is obviously no point in the annulus
$D(\lambda,e^{-H(\lambda)})\setminus D(\lambda,e^{-10H(\lambda)})$ which means that the
$j$ points in $D(\lambda,e^{-10H(\lambda)}))$ are far from the other points of $\Lambda$. So we can add each one of these $j$ points in a weakly separated way to one of the sequences $\Lambda_1,\ldots,\Lambda_{j-1}$,
and the $j$-th point in a new sequence $\Lambda_j$ (which is of course weakly separated since the 
groups $\Lambda\cap D(\lambda,e^{-10H(\lambda)})$ appearing in $\Lambda_a$ are weakly separated).


(b)$\Rightarrow$(c). It remains to see that $X^{n-1}(\Lambda)\subset
X^{n}(\Lambda)$.
Given $\omega(\Lambda)\in X^{n-1}(\Lambda)$ and points
$(\lambda_1,\ldots,\lambda_{n+1}) \in \Lambda^{n+1}$, we have to estimate
$\Delta^n \omega(\lambda_1,\ldots,\lambda_{n+1})$. Under the assumption (b), at
least one of these $n+1$ points  is not in the disk $D(\lambda_1, e^{-H(\lambda_1)})$. 
Note that $\Lambda^n$ is invariant by permutation of the
$n+1$ points, thus we may assume that $\rho(\lambda_1 ,\lambda_{n+1}) \ge
e^{-H(\lambda_1)}$. Using the fact that $\omega(\Lambda)\in
X^{n-1}(\Lambda)$, there exists $H_0\in\Har_+(\D)$ such that
\[
\begin{split}
\vert \Delta^n \omega (\lambda_1,\ldots,\lambda_{n+1})\vert & \le 
\frac{\vert\Delta^{n-1} \omega (\lambda_2,\ldots,\lambda_{n+1})\vert+\vert \Delta^{n-1}
\omega (\lambda_1,\ldots,\lambda_n)\vert}{\rho(\lambda_1 ,\lambda_{n+1})}
\\
& \le  e^{H(\lambda_1)} \left( e^{H_0(\lambda_2)+\cdots+H_0(\lambda_{n+1})} +e^{H_0(\lambda_1)+\cdots+H_0(\lambda_{n})}\right)\\
& \le 2 e^{H(\lambda_1)} e^{H_0(\lambda_1)+\cdots+H_0(\lambda_{n+1})}\ .
\end{split}
\] 
Taking $\tilde H=H_0+H + \log 2$ we get the desired estimate.

(c)$\Rightarrow$(b). We prove this by contraposition. Assume that for all $H\in\Har_+(\D)$ there exists $\lil$ such that
\begin{eqnarray}\label{many}
\# [\Lambda\cap D(\lambda, e^{-H(\lambda)})]>n\ . 
\end{eqnarray}
Consider the partition of $\mathbb D$ into the dyadic squares
\[
 Q_{k,j}=\bigl\{z=re^{i\theta}\in\mathbb D : 1-2^{-k}\leq r< 1-2^{-k-1}\ ,\ j\frac{2\pi}k\leq \theta<(j+1)\frac{2\pi}k\bigr\},
\]
where $k\geq 0$ and $ j=0,\dots 2^k-1$.

Let $\Lambda_{k,j}=\Lambda\cap Q_{k,j}$ and
\[
 r_{k,j}=\inf\{r>0\ :\ \exists \lambda\in\Lambda_{k,j}\ :\ \#(\Lambda\cap \overline{D(\lambda,r)})\ge n+1\}.
\]
Take $\alpha_{k,j}\in\Lambda_{k,j}$ such that $\#(\Lambda\cap \overline{D(\alpha_{k,j},r_{k,j})})\ge n+1$.

\medskip

\textit{Claim:} For all $H\in\Har_+(\D)$,
\[
 \inf_{k,j}\frac{r_{k,j}}{e^{-H(\alpha_{k,j})}}=0\ .
\]

To see this assume otherwise that there exist $H\in\Har_+(\D)$  and $\eta >0$ with 
\[
 \frac{r_{k,j}}{e^{-H(\alpha_{k,j})}}\geq \eta \ .
\]
In particular, by Harnack's inequalities,
\begin{equation}\label{claim}
 \log \frac 1{r_{k,j}}\leq 3 H(z)+\log(\frac 1\eta),\quad z\in Q_{k,j}.
\end{equation}

Let $\tilde H:=\log (2/\eta)+ 4 H \in \Har_+(\D)$. By the hypothesis \eqref{many} there exist $k_0\geq 0$, $j_0\in\{0,\dots,2^{k_0}-1\}$, $\lambda_{k_0, j_0}\in\Lambda_{k_0, j_0}$ such that
\[
 \#\left[\Lambda\cap\overline{D(\lambda_{k_0,j_0}, e^{-\tilde H(\lambda_{k_0,j_0})})}\right]\geq n+1\ .
\]
In particular, by definition of $r_{k,j}$, we have $r_{k_0,j_0}\le e^{-\tilde H(\lambda_{k_0, j_0})}$, that is
\[
 \log\frac 1{r_{k_0,j_0}}\geq \tilde H(\lambda_{k_0, j_0})= \log(\frac 2\eta)+ 4 H(\lambda_{k_0,j_0}) ,
\]
which contradicts \eqref{claim}.

Now take a separated sequence $\mathcal L\subset \{\alpha_{k,j}\}_{k,j}$ for which the disks $D(\alpha,r_{\alpha})$, $\alpha\in\mathcal{L}$, are disjoint, where for $\alpha=\alpha_{k,j}\in \mathcal L$
we denote $r_{\alpha}=r_{k,j}$.
Given $\alpha\in\mathcal L$, let $\lambda_1^{\alpha},\dots,\lambda_n^{\alpha}$ be its $n$ nearest (not necessarily unique) points, arranged by increasing distance.
Notice that $\rho(\alpha,\lambda_n^{\alpha})=r_{\alpha}$.

In order to construct a sequence $\omega(\Lambda)\in X^{n-1}(\Lambda)\setminus
X^n(\Lambda)$, put 
\[
\begin{cases}
\omega(\alpha)  = \prod\limits_{j=1}^{n-1} b_{\alpha}(\lambda_j^{\alpha}),\quad  &\textrm{for all
} \alpha\in\mathcal L\\
\omega(\lambda)  = 0\ & \textrm{ if }\, \lambda\in \Lambda\setminus \mathcal L.
\end{cases}
\]
 
To see that $\omega(\Lambda)\in X^{n-1}(\Lambda)$ let us estimate $\Delta^{n-1}\omega(\lambda_1,\ldots,\lambda_n)$ for any given
$(\lambda_1,\ldots,\lambda_n)\in \Lambda^n$. 
By the separation conditions on $\mathcal L$, we know that none of the 
$\lambda_j^{\alpha}$ is in $\mathcal L$. Hence, we may
assume that at most one of the points is in $\mathcal L$. On the other hand, it
is clear that 
$\Delta^{n-1}\omega(\lambda_1,\ldots,\lambda_n)=0$ if all the points are in
$\Lambda\setminus \mathcal L$. Thus, taking into account that $\Delta^{n-1}$ is
invariant by permutations, we will only consider the case where  $\lambda_n$ is
some $\alpha\in \mathcal L$ and  $\lambda_1,\ldots,\lambda_{n-1}$ are in
$\Lambda\setminus \mathcal L$. In that case, 
\[ 
\vert \Delta^{n-1}\omega(\lambda_1,\ldots,\lambda_{n-1},\alpha)\vert =\vert
\omega(\alpha)\vert \prod_{j=1}^{n-1} \rho(\alpha,\lambda_j)^{-1}=\prod_{j=1}^{n-1}
\frac{\rho(\alpha,\lambda_j^{\alpha})}{\rho(\alpha,\lambda_j)}
\le 1,
\]
 as desired.

On the other hand, a similar computation yields
\[ 
\begin{split}
\vert \Delta^n \omega(\lambda_1^{\alpha},\ldots,\lambda_n^{\alpha},\alpha)\vert   =\vert
\omega(\alpha)\vert \prod_{j=1}^n\rho(\alpha,\lambda_j^{\alpha})^{-1}
 =\rho(\alpha,\lambda_n^{\alpha})^{-1} =r_{\alpha}^{-1}.
\end{split}
\]
The Claim above prevents the existence of $H\in\Har_+(\D)$ such that
\[
r_{\alpha}^{-1}=\vert \Delta^n \omega(\lambda_1^{\alpha},\ldots,\lambda_n^{\alpha},\alpha)\vert
e^{-(H(\lambda_1^{\alpha})+\cdots+H(\lambda_n^{\alpha})+H(\alpha))}\leq C\ ,
\]
since otherwise, again by Harnack's inequalities, we would have
\[
 r_{\alpha}^{-1}\leq e^{3(n+1) H(\alpha)}, \quad \alpha\in\mathcal{L}\ .
\]
\end{proof}

It is clear from the characterization \eqref{intN} of interpolating sequences for $\N$ that such sequences must be weakly separated. The previous result gives another way of showing it. 

\begin{corollary}\label{weak}
If $\Lambda$ is an interpolating sequence, then it is weakly separated.
\end{corollary}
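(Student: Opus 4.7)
The plan is to derive this as an essentially immediate consequence of the tools built up before the statement, combining Proposition \ref{incl}, Lemma \ref{inclusions}, and Lemma \ref{equiv} applied in the case $n=1$.

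First I would unpack what it means for $\Lambda$ to be interpolating for $\N$: by definition the restriction operator is surjective onto the trace space, so $\N|\Lambda = \N(\Lambda)$, and by the characterization recalled after the definition this trace coincides with $X^0(\Lambda)$. Next, I would invoke Proposition \ref{incl} with $n=2$ to get the inclusion $\N|\Lambda \subset X^1(\Lambda)$. Chaining these observations yields $X^0(\Lambda) \subset X^1(\Lambda)$. The reverse inclusion $X^1(\Lambda) \subset X^0(\Lambda)$ is given by Lemma \ref{inclusions}, so in fact $X^0(\Lambda) = X^1(\Lambda)$.

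Finally, I would apply the implication (c)$\Rightarrow$(a) of Lemma \ref{equiv} in the case $n=1$: the equality $X^0(\Lambda) = X^1(\Lambda)$ forces $\Lambda$ to be the union of a single weakly separated sequence, i.e., $\Lambda$ itself is weakly separated.

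There is no serious obstacle here — the entire content of the corollary is already packaged inside the previously established equivalences. The only thing one has to be careful about is to invoke Proposition \ref{incl} at the right order ($n=2$, so that we land in $X^{n-1} = X^1$) and to apply Lemma \ref{equiv} at $n=1$, where assertion (a) specializes to weak separation of $\Lambda$ itself.
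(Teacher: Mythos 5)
Your argument is correct and is essentially identical to the paper's own proof: both deduce $X^0(\Lambda)=X^1(\Lambda)$ from $\N|\Lambda=X^0(\Lambda)$ together with Proposition \ref{incl} (and the reverse inclusion from Lemma \ref{inclusions}), and then conclude via Lemma \ref{equiv} applied with $n=1$. Nothing further is needed.
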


\begin{proof}
If $\Lambda$ is an interpolating sequence, then $\N|\Lambda=X^0(\Lambda)$. 
On the other hand, by Proposition \ref{incl}, $\N|\Lambda\subset X^1(\Lambda)$. 
Thus  $X^0(\Lambda)= X^1(\Lambda)$.
We conclude by the preceding lemma applied to the particular case $n=1$.
\end{proof}

The covering provided by the following result will be useful.

\begin{lemma}\label{cover}
Let $\Lambda_1,\ldots,\Lambda_n$ be weakly separated sequences. 
There exist $H\in\Har_+(\D)$,  positive constants $\alpha,\beta$, a
subsequence $\mathcal{L}\subset \Lambda_1\cup\cdots\cup \Lambda_n$ and disks
$D_\lambda=D(\lambda, r_\lambda)$, $\lambda\in \mathcal{L}$, such that
\begin{itemize}
\item[(i)]  $\Lambda_1\cup\cdots\cup \Lambda_n\subset  \cup_{\lambda\in \mathcal{L}} D_\lambda$, 
\smallskip
\item[(ii)]   $e^{-\beta H(\lambda)}\le r_\lambda\le e^{-\alpha H(\lambda)}$ for all $\lambda \in \mathcal{L}$,
\smallskip
\item[(iii)]  $\rho(D_\lambda,D_{\lambda'})\ge  e^{-\beta H(\lambda)}$ for all $\lambda,\lambda' \in \mathcal{L}$,
$\lambda\neq \lambda'$.
\smallskip
\item[(iv)] $\#(\Lambda_j\cap D_\lambda)\leq 1$ for all $j=1,\dots,n$ and
$\lambda\in \mathcal{L}$. 
\end{itemize}
\end{lemma}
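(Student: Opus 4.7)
The plan is to build a Whitney-type partition of $\Lambda:=\Lambda_1\cup\cdots\cup\Lambda_n$ into small, mutually separated clusters, each containing at most one point of each $\Lambda_j$, and then let $\mathcal{L}$ consist of one representative per cluster. Choose $H_0\in\Har_+(\D)$ to be a common harmonic majorant witnessing the weak separation of every $\Lambda_j$, enlarged additively so that $\inf H_0\ge\log 3$; then a Harnack argument shows that $D(\lambda,e^{-H_0(\lambda)})$ contains at most one point of each $\Lambda_j$, whence
\[
\#\bigl(\Lambda\cap D(\lambda,e^{-H_0(\lambda)})\bigr)\le n \qquad \textrm{for every } \lambda\in\Lambda.
\]

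The second step is a gap argument. For each $\lambda\in\Lambda$, consider the $n$ disjoint pseudohyperbolic annuli
\[
A_k(\lambda)=\bigl\{z\in\D:e^{-(k+1)H_0(\lambda)}\le\rho(\lambda,z)<e^{-kH_0(\lambda)}\bigr\},\qquad k=1,\dots,n.
\]
Since at most $n-1$ points of $\Lambda\setminus\{\lambda\}$ lie in $\bigcup_k A_k(\lambda)$, pigeonhole yields a smallest $k(\lambda)\in\{1,\dots,n\}$ with $A_{k(\lambda)}(\lambda)\cap\Lambda=\emptyset$. Set $r_\lambda:=e^{-(k(\lambda)+1)H_0(\lambda)}$ (inner edge of the gap) and $R_\lambda:=e^{-k(\lambda)H_0(\lambda)}$ (outer edge), so that $r_\lambda=R_\lambda\,e^{-H_0(\lambda)}$ and $r_\lambda\in\bigl[e^{-(n+1)H_0(\lambda)},e^{-2H_0(\lambda)}\bigr]$.

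Now build $\mathcal{L}$ greedily: start with $\mathcal{L}=\emptyset$ and iteratively adjoin any $\lambda\in\Lambda$ not already in $D(\mu,r_\mu)$ for some earlier $\mu\in\mathcal{L}$. Setting $D_\lambda:=D(\lambda,r_\lambda)$ and $H:=H_0$, property (i) follows from maximality, property (ii) from the explicit bounds on $r_\lambda$ (one can take $\alpha=2$), and property (iv) holds because $r_\lambda\le e^{-H_0(\lambda)}$ combined with the weak separation of each $\Lambda_j$.

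The remaining and most delicate point is (iii). For distinct $\lambda,\lambda'\in\mathcal{L}$, the construction ensures $\lambda'\notin D(\lambda,r_\lambda)$, and the emptiness of $A_{k(\lambda)}(\lambda)$ then forces $\rho(\lambda,\lambda')\ge R_\lambda$; by symmetry also $\rho(\lambda,\lambda')\ge R_{\lambda'}$. The pseudohyperbolic triangle inequality $\rho(a,c)\le\rho(a,b)+\rho(b,c)$ yields
\[
\rho(D_\lambda,D_{\lambda'})\ge\rho(\lambda,\lambda')-r_\lambda-r_{\lambda'}.
\]
Assuming without loss of generality $R_\lambda\ge R_{\lambda'}$, one has $r_\lambda+r_{\lambda'}\le R_\lambda\bigl(e^{-H_0(\lambda)}+e^{-H_0(\lambda')}\bigr)\le\tfrac{2}{3}R_\lambda$, hence $\rho(D_\lambda,D_{\lambda'})\ge R_\lambda/3\ge e^{-(n+1)H_0(\lambda)}$. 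In the reverse case $R_{\lambda'}>R_\lambda$ one needs Harnack to control $H_0(\lambda')$ in terms of $H_0(\lambda)$: when $\rho(\lambda,\lambda')\le 1/2$ we get $H_0(\lambda')\le 3H_0(\lambda)$, so $R_{\lambda'}/3\ge e^{-(3n+1)H_0(\lambda)}$; when $\rho(\lambda,\lambda')>1/2$, the radii are at most $e^{-2H_0(\lambda)}\le 1/9$, and $\rho(D_\lambda,D_{\lambda'})\ge 1/2-2/9>1/4$ dominates $e^{-(3n+1)H_0(\lambda)}$. Taking $\beta=3n+1$ therefore settles (iii) uniformly; the delicate matching of Harnack-induced losses with the exponent $\beta$ is the one subtle point in the argument.
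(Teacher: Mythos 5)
Your construction is genuinely different from the paper's (which proceeds by induction on the number of sequences, enlarging the old disks that come close to the new sequence and spawning new, much smaller disks for the remaining points), and most of it works: the pigeonhole ``gap'' argument and properties (i), (ii), (iv) go through (modulo the small point that in the first step $H_0$ must dominate roughly $2\max_j H_j$ plus a constant, not merely be a common majorant, to absorb the Harnack factor). The genuine gap is in (iii), exactly at the sentence ``the construction ensures $\lambda'\notin D(\lambda,r_\lambda)$ \dots by symmetry also $\rho(\lambda,\lambda')\ge R_{\lambda'}$''. The greedy selection is order-dependent and asymmetric: if $\lambda'$ is adjoined before $\lambda$, you only know $\lambda\notin D(\lambda',r_{\lambda'})$, hence $\rho(\lambda,\lambda')\ge R_{\lambda'}$; nothing prevents the later, possibly much larger disk $D(\lambda,r_\lambda)$ from containing the earlier center $\lambda'$, because the radii $r_\mu$ jump between the scales $e^{-2H_0}$ and $e^{-(n+1)H_0}$ and are not monotone along the enumeration. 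Concretely, take $n=3$, $H_0\equiv h$ constant and a cluster of three points in different $\Lambda_j$'s, the rest of $\Lambda$ far away: $\lambda'$, then $\lambda$ with $\rho(\lambda,\lambda')=0.9\,e^{-2h}$, then $q$ on the opposite side of $\lambda'$ with $\rho(q,\lambda')=e^{-h}-\tfrac12 e^{-2h}$, so that $\rho(q,\lambda)>e^{-h}$. Then $A_1(\lambda')\ni q$ and $A_2(\lambda')\ni\lambda$ while $A_3(\lambda')$ is empty, so $k(\lambda')=3$, $r_{\lambda'}=e^{-4h}$, $R_{\lambda'}=e^{-3h}$; on the other hand $A_1(\lambda)$ is empty, so $k(\lambda)=1$ and $r_\lambda=e^{-2h}$. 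Processing $\lambda'$ first, the point $\lambda$ is not covered by $D(\lambda',e^{-4h})$ and is adjoined to $\mathcal{L}$, but then $\lambda'\in D(\lambda,e^{-2h})$, so $\rho(D_\lambda,D_{\lambda'})=0$ and (iii) fails for the disks you built.

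Note also that the obvious repair, namely taking $\mathcal{L}$ maximal with the \emph{mutual} non-containment property ($\mu'\notin D(\mu,r_\mu)$ and $\mu\notin D(\mu',r_{\mu'})$ for all distinct $\mu,\mu'\in\mathcal{L}$), sacrifices (i): a discarded point $\lambda$ is only known to satisfy $\lambda\in D(\mu,r_\mu)$ \emph{or} $\mu\in D(\lambda,r_\lambda)$ for some $\mu\in\mathcal{L}$, and in the second case $\lambda$ need not be covered. This tension between coverage and separation is precisely what the paper's inductive construction resolves: at each step the already constructed disks that come within $\tfrac14 e^{-\beta_k H}$ of the new sequence $\Lambda_{k+1}$ are enlarged to absorb the nearby new points, only the genuinely isolated new points generate disks (of the much smaller radius $\tfrac18 e^{-\beta_k H}$), and the constants $\alpha_k,\beta_k$ are adjusted so that the separation (iii) is preserved through a four-case check. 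To salvage your single-scale approach you would need an analogous merging or enlargement mechanism for clusters whose selected gaps occur at different scales; as written, the argument for (iii) does not hold.
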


\begin{proof}
Let $H\in\Har_+(\D)$ be such that
\begin{equation}\label{separation}
 \rho(\lambda,\lambda') \ge  e^{-H(\lambda)}, \qquad
\forall\lambda,\lambda'\in \Lambda_j,\ \lambda\neq\lambda', \  \forall
j=1,\ldots, n\ .
\end{equation}

We will proceed by induction on $k=1,\ldots,n$ to show the existence of a
subsequence $\mathcal{L}_k\subset \Lambda_1\cup\cdots\cup \Lambda_{k}$ such that:
\begin{align*}
&(i)_k \quad \Lambda_1\cup\cdots\cup \Lambda_k \subset  
\cup_{\lambda\in\mathcal{L}_k} D(\lambda, R_\lambda^k), \\
&(ii)_k \quad e^{-\beta_k H(\lambda)} \le R_\lambda^k\le e^{-\alpha_k H(\lambda)}, \\
&(iii)_k\quad \rho(D(\lambda, R_\lambda^k),D(\lambda', R_{\lambda'}^k))
\ge  e^{-\beta_k H(\lambda)}\, \textrm{for any $\lambda,\lambda'\in \mathcal{L}_k$, $\lambda\neq\lambda'$}.
\end{align*}

Then it suffices to chose $\mathcal{L}=\mathcal{L}_n$, $\alpha=\alpha_n$, $\beta=\beta_n$, $r_\lambda=R_\lambda^n$. 
The weak separation and the fact that $r_\lambda< e^{-H(\lambda)}/3 $ implies that 
$\#\Lambda_j\cap D(\lambda,r_\lambda)\leq 1$, $j=1,\dots,k$, hence the lemma follows.

For $k=1$, the property is clearly verified with  $\mathcal{L}_1=\Lambda_1$ and
$R_\lambda^1 = e^{-CH(\lambda)}$, with $C$ big enough so that $(iii)_1$ holds ($C=3$, for instance). Properties $(i)_1$, $(ii)_1$ follow immediately.

Assume the property true for $k$ and split $\mathcal{L}_k=\mathcal{M}_1\cup
\mathcal{M}_2$ and $\Lambda_{k+1}=\mathcal{N}_1\cup \mathcal{N}_2$, where 
\[
\begin{split}
\mathcal{M}_1 & =\{\lambda\in \mathcal{L}_k\ :\ D(\lambda, R_\lambda^k+1/4\, e^{-\beta_k H(\lambda)})\cap \Lambda_{k+1}\not= \emptyset\}, \\
\mathcal{N}_1 & =\Lambda_{k+1}\cap\bigcup_{\lambda\in \mathcal{L}_k} 
D(\lambda, R_\lambda^k+1/4\, e^{-\beta_k H(\lambda)}), \\
\mathcal{M}_2 & = \mathcal{L}_k\setminus \mathcal{M}_1,\\
\mathcal{N}_2 & = \Lambda_{k+1}\setminus \mathcal{N}_1.
\end{split}
\]
Now, we put $\mathcal{L}_{k+1}=\mathcal{L}_k\cup \mathcal{N}_2$ and define the
radii $R_\lambda^{k+1}$ as follows: 
\[
R_\lambda^{k+1}  =
\begin{cases}
R_\lambda^k+1/4\, e^{-\beta_k H(\lambda)}\ &\textrm{if}\ \lambda\in  \mathcal{M}_1,   \\
R_\lambda^k \ &\textrm{if}\  \lambda\in  \mathcal{M}_2, \\
 1/8\, e^{-\beta_{k} H(\lambda)} \ &\textrm{if}\  \lambda\in  \mathcal{N}_2.
\end{cases}
\]
It is clear that $(i)_{k+1}$ holds:
 \[
\Lambda_1\cup\cdots\cup \Lambda_{k+1}\subset  \bigcup_{ \lambda\in
\mathcal{L}_{k+1}}
D(\lambda, R_\lambda^{k+1})\ .
\]
Also, by the induction hypothesis,
\[
\frac 18  e^{-\beta_{k} H(\lambda)} \le  R_\lambda^{k+1}\le e^{-\alpha_k H(\lambda)}+\frac 14 e^{-\beta_k H(\lambda)}.
\]
Thus, to see $(ii)_{k+1}$ there is enough to choose $\alpha_{k+1},\beta_{k+1}$ such that
\[
 e^{-\alpha_k H(\lambda)}+1/4\, e^{-\beta_k H(\lambda)}\leq e^{-\alpha_{k+1} H(\lambda)},
\]
for instance $\alpha_{k+1}=\alpha_{k}-1$, and
\begin{equation}\label{B}
 1/8\, e^{-\beta_k H(\lambda)} \geq e^{-\beta_{k+1} H(\lambda)}\ ,
\end{equation}
that is $\beta_{k+1} H(\lambda)\geq \beta_{k} H(\lambda)+\log 8$. Assuming without loss of generality that $H(\lambda)\geq \log 8$, there is enough choosing $\beta_{k+1}\geq \beta_k+1$.

In order to prove $(iii)_k$ take now  $\lambda,\lambda'\in\mathcal{L}_{k+1}$, $\lambda\neq\lambda'$. Notice that 
\[
\rho(D(\lambda,R_\lambda^{k+1}), D(\lambda',R_{\lambda'}^{k+1}))=
\rho( \lambda,\lambda')-R_\lambda^{k+1}-R_{\lambda'}^{k+1} .
\]

Split into four different cases:

1. \underline{$\lambda,\lambda'\in\mathcal{L}_k$}. Assume without loss of generality that $H(\lambda)\leq H(\lambda')$. Then, by the definition of $R_\lambda^{k+1}$, we see that
\[
 \rho(D(\lambda,R_\lambda^{k+1}), D(\lambda',R_{\lambda'}^{k+1}))=\rho( \lambda,\lambda')-R_\lambda^{k}-R_{\lambda'}^{k}-\frac 14 e^{-\beta_k H(\lambda)}-\frac 14 e^{-\beta_k H(\lambda')}.
\]
By inductive hypothesis
\[
 \rho( \lambda,\lambda')-R_\lambda^{k}-R_{\lambda'}^{k}=\rho(D(\lambda,R_\lambda^{k}), D(\lambda',R_{\lambda'}^{k}))\geq e^{-\beta_k H(\lambda)}\ .
\]
Thus, by \eqref{B},
\begin{align*}
\rho(D(\lambda,R_\lambda^{k+1}), D(\lambda',R_{\lambda'}^{k+1}))&
\ge e^{-\beta_k H(\lambda)}-\frac 12 e^{-\beta_k H(\lambda)}=\frac 12 e^{-\beta_k H(\lambda)}\geq e^{-\beta_{k+1} H(\lambda)} .
\end{align*}

2. \underline{$\lambda,\lambda'\in\mathcal{N}_2$}. Assume also $H(\lambda)\leq H(\lambda')$. 
Condition \eqref{separation} implies $\rho( \lambda,\lambda')\ge  e^{-H(\lambda)}$, hence
 \[
\rho(D(\lambda,R_\lambda^{k+1}), D(\lambda',R_{\lambda'}^{k+1}))
\ge  e^{-H(\lambda)}-\frac 14 e^{-\beta_k H(\lambda)} .
\]
If $\beta_k\geq 2$, by \eqref{B} we have
\[
 \rho(D(\lambda,R_\lambda^{k+1}), D(\lambda',R_{\lambda'}^{k+1}))\ge e^{-2 H(\lambda)}\geq  e^{-\beta_k H(\lambda)}\ge e^{-\beta_{k+1} H(\lambda)}.
\]

3. \underline{$\lambda \in \mathcal{M}_1, {\lambda'}\in \mathcal{N}_2$} 
By definition of $\mathcal{M}_1$ there exists $\beta\in \mathcal{N}_1 $ such that 
\[
 \rho( \lambda,\beta)\le R_\lambda^{k} +\frac 14 e^{-\beta_k H(\lambda)}.
\]

Then, using \eqref{separation} on $\beta,\lambda'\in\Lambda_{k+1}$,  we have, by Harnack's inequalities (if $\beta_k\geq 4$),
\begin{align*}
\rho(\lambda,\lambda') &\ge \rho( \beta,\lambda')-\rho(\lambda,\beta)
\ge  e^{-H(\beta)}-R_\lambda^{k}-\frac 14 e^{-\beta_k H(\lambda)} 
\ge  e^{-2 H(\lambda)}-\frac 54 e^{-\beta_k H(\lambda)} \\
&\ge  e^{-4 H(\lambda)}\geq e^{-\beta_k H(\lambda)}\geq e^{-\beta_{k+1} H(\lambda)}
\ .
\end{align*}

4. \underline{$\lambda \in \mathcal{M}_2, {\lambda'}\in \mathcal{N}_2$}. 
Taking into account the definition of $R_\lambda^{k+1},R_{\lambda'}^{k+1}$ we have
\begin{align*}
\rho(D(\lambda,R_\lambda^{k+1}), D(\lambda',R_{\lambda'}^{k+1}))  =
\rho(\lambda,\lambda')-R_\lambda^k-\frac 18  e^{-\beta_k H(\lambda)}
\end{align*}

Since
\[
 \rho(\lambda,\lambda')-R_\lambda^k\geq \rho(D(\lambda,R_\lambda^{k}), D(\lambda',R_{\lambda'}^{k})),
\]
by inductive hypothesis and by \eqref{B}
\begin{align*}
\rho(D(\lambda,R_\lambda^{k+1}), D(\lambda',R_{\lambda'}^{k+1}))  \geq 
\frac 14  e^{-\beta_k H(\lambda)}-\frac 18  e^{-\beta_k H(\lambda)}\geq 
e^{-\beta_{k+1} H(\lambda)}\ .
\end{align*}

All together, it is enough to start with $C>n$, define $\alpha_1=\beta_1=C$, and then define $\alpha_k$, $\beta_k$ inductively by
\[
 \alpha_{k+1}=\alpha_k-1=\cdots= C-k\ ,\qquad \beta_{k+1}=\beta_k+1=\cdots= C+k\ .
\]
\end{proof}

\section{Proof of Main Theorem. Necessity}\label{necessity}

Assume $\N|\Lambda= X^{n-1}(\Lambda)$, $n\ge 2$. Using
Proposition \ref{incl}, we have $X^{n-1}(\Lambda)=X^n(\Lambda)$, and by Lemma
\ref{equiv} we deduce that 
$\Lambda=\Lambda_1\cup \cdots\cup \Lambda_n$, where $\Lambda_1,\ldots,\Lambda_n$ are weakly separated sequences. We want to show that each $\Lambda_j$ is an interpolating sequence. 

Let $\omega(\Lambda_j)\in \N(\Lambda_j)= X^0(\Lambda_j)$.  
Let $\cup_{\lambda\in \mathcal{L}}D_\lambda$ be the covering of $\Lambda$ given
by Lemma \ref{cover}. 
We extend $\omega(\Lambda_j)$ to a sequence $\omega(\Lambda)$ which is constant
on each $D_\lambda\cap\Lambda_j$ in the following way:
\[
\omega_{|D_\lambda\cap\Lambda }=
\begin{cases}
0\quad &\textrm{if $D_\lambda\cap\Lambda_j=\emptyset$}\\
\omega(\alpha)\quad &\textrm{if $D_\lambda\cap\Lambda_j=\{\alpha\}$}\ .
\end{cases}
\]
We verify by induction that the extended sequence is in $X^{k-1}(\Lambda)$ for
all $k\leq n$.  It is clear that it belongs to $X^0(\Lambda)$. 

Assume that $\omega\in X^{k-2}(\Lambda)$, $k\geq 2$, and consider $(\alpha_1,\ldots,\alpha_k)\in
\Lambda^k$. If all the points are in the same $D_\lambda$ then
$\Delta^{k-1}\omega(\alpha_1,\ldots,\alpha_k)=0$, so we may assume that
$\alpha_1\in D_{\lambda}$ and $\alpha_k\in D_{\lambda'}$ with $\lambda
\not=\lambda'$. Then  we have, for some $H_0\in\Har_+(\D)$,
 \[
 \rho( \alpha_1,\alpha_k) \ge  e^{-\beta H_0(\alpha_1)},\qquad k\neq 1.
 \]
With this and the induction hypothesis it is clear that for some $H\in\Har_+(\D)$,
\begin{align*}
\vert \Delta^{k-1}\omega(\alpha_1,\ldots,\alpha_k)\vert&=\left|\frac{
\Delta^{k-2}\omega(\alpha_2,\ldots,\alpha_k)-
\Delta^{k-2}\omega(\alpha_1,\ldots,\alpha_{k-1})}{b_{\alpha_1}(\alpha_k)}\right|\\
&\le e^{\beta H_0(\alpha_1)} \bigl(e^{H(\alpha_2)+\cdots+H(\alpha_k)}+e^{H(\alpha_1)+\cdots+H(\alpha_{k-1})}\bigr)\ .
\end{align*}
Taking for instance $\tilde H=H+\beta H_0 + \log 2$ we get 
\[
 \vert \Delta^{k-1}\omega(\alpha_1,\ldots,\alpha_k)\vert\leq e^{\tilde H(\alpha_1)+\cdots+\tilde H(\alpha_k)}\ ,
\]
thus $\omega(\Lambda)\in X^{k-1}(\Lambda)$. By assumption there
exist $f\in \N$ interpolating  the values $\omega(\Lambda)$. In particular $f$
interpolates  $\omega(\Lambda_j)$.

\section{Proof of the Main Theorem. Sufficiency}\label{sufficiency}

Assume $\Lambda= \Lambda_1\cup\dots\cup \Lambda_n$, where $\Lambda_j\in\Int \N$, $j=1,\dots,n$, and denote $\Lambda_j=\{\lambda_k^{(j)}\}_{k\in\mathbb N}$. Denote also by $B_j$ the Blaschke product with zeros on $\Lambda_j$. We will use the following property of the Nevanlinna interpolating sequences (see Theorem~1.2 in \cite{HMN}).

\begin{lemma}\label{int-local}
Let $\Lambda\in\Int N $ and let $B$ the Blaschke product associated to $\Lambda$. There exists $H_1\in\Har_+(\D)$ such that 
\[
|B(z)|\geq e^{-H_1(z)}\rho(z,\Lambda)\ \qquad z\in\D\ .
\]
\end{lemma}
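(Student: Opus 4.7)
My plan is to isolate the nearest zero of $B$ and reduce the inequality to a pointwise lower bound for the quotient Blaschke product $B_{\lambda_0}$ at two nearby points.

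For each $z\in\D$, choose $\lambda_0=\lambda_0(z)\in\Lambda$ with $\rho(z,\lambda_0)=\rho(z,\Lambda)$. The factorization $B(w)=b_{\lambda_0}(w)B_{\lambda_0}(w)$ gives $|B(z)|=\rho(z,\Lambda)\,|B_{\lambda_0(z)}(z)|$, so the lemma reduces to showing
\[
|B_{\lambda_0(z)}(z)|\ge e^{-H_1(z)},\qquad z\in\D,
\]
for some $H_1\in\Har_+(\D)$. At the single point $\lambda_0$ this is precisely the hypothesis \eqref{intN}: $|B_{\lambda_0}(\lambda_0)|\ge e^{-H(\lambda_0)}$, and Harnack's inequality controls $H(\lambda_0)$ by a multiple of $H(z)$ once $\rho(z,\lambda_0)$ is small. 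The substantive task is thus to transport the estimate from $\lambda_0$ to $z$ by bounding
\[
\log\frac{|B_{\lambda_0}(\lambda_0)|}{|B_{\lambda_0}(z)|}=\sum_{\lambda\in\Lambda\setminus\{\lambda_0\}}\log\frac{\rho(\lambda_0,\lambda)}{\rho(z,\lambda)}
\]
by a positive harmonic function of $z$.

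I would split $\Lambda\setminus\{\lambda_0\}$ into a \emph{far} group, consisting of those $\lambda$ with $\rho(z,\lambda)\ge 1/2$, and a \emph{near} group. For the far group, the first-order expansion of $1-\rho(z,\lambda)^2=(1-|z|^2)(1-|\lambda|^2)/|1-\bar z\lambda|^2$ around $z=\lambda_0$ yields an estimate of the form
\[
\left|\log\rho(z,\lambda)-\log\rho(\lambda_0,\lambda)\right|\lesssim \frac{(1-|z|)(1-|\lambda|)}{|1-\bar z\lambda|^2},
\]
so that the corresponding sum is controlled by the Poisson integral at $z$ of the Blaschke measure $\sum_\lambda (1-|\lambda|)\delta_{\lambda/|\lambda|}$, which is a positive harmonic function. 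For the near group, the weak separation of interpolating sequences (Corollary \ref{weak}) combined with $\rho(z,\lambda_0)\le\rho(z,\lambda)$ forces each factor $\rho(z,\lambda)\gtrsim e^{-CH(z)}$, and the number of such $\lambda$ is controlled through \eqref{intN} applied at $\lambda_0$. Finally, the easy regime $\rho(z,\Lambda)\ge 1/2$ is handled separately: there $-\log|B(z)|$ itself is dominated directly by the Poisson integral above.

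The main obstacle is the quantitative treatment of the near group: a naive counting bound gives an exponential-in-$H(z)$ number of nearby zeros, producing an estimate of the form $F(H(z))$ with $F$ non-linear, which is not harmonic. Converting this into a genuine positive harmonic majorant $H_1$ requires going beyond weak separation and exploiting the full strength of \eqref{intN}, for instance via the Carleson-type equivalents established in \cite{HMNT} that relate the hypothesis directly to the finiteness of sums such as $\sum_\lambda (1-|\lambda|)e^{-H(\lambda)}$; this is the delicate step where one genuinely uses that $\Lambda$ is interpolating for $\N$ rather than merely weakly separated.
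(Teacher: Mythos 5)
Note first that the paper does not prove this lemma at all: it is quoted from \cite{HMN} (Theorem 1.2 there), so your argument has to stand on its own. Your reduction is the right one — factor out the closest zero $\lambda_0$, so that it suffices to majorize $\log\bigl(1/|B_{\lambda_0}(z)|\bigr)$ by a positive harmonic function — and the far zeros, those with $\rho(z,\lambda)\ge 1/2$, are indeed absorbed by the standard bound $\log\frac{1}{\rho(z,\lambda)}\le C\bigl(1-\rho(z,\lambda)^2\bigr)\le C'(1-|\lambda|)P\bigl(z,\lambda/|\lambda|\bigr)$, whose sum is a Poisson integral of the (finite) Blaschke measure; you do not even need your two-point expansion, since $\log\rho(\lambda_0,\lambda)\le 0$ can simply be dropped. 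The genuine gap is the near zeros: you first assert their number is ``controlled through \eqref{intN} applied at $\lambda_0$'', but then concede that your counting only gives an exponential-in-$H(z)$ bound and defer the ``delicate step'' to unspecified Carleson-type results of \cite{HMNT}. As written, the decisive estimate is not proved, so the proposal is incomplete exactly where the interpolation hypothesis must be used.

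The gap is real but fillable by the very idea you abandoned, and weak separation plays no role (it cannot: the conclusion of the lemma at $z=\lambda$ gives back \eqref{intN}, which does not follow from weak separation). If $\rho(z,\lambda)<1/2$ and $\lambda\ne\lambda_0$, then $\rho(z,\lambda_0)\le\rho(z,\lambda)<1/2$, so the pseudohyperbolic triangle inequality gives
\[
\rho(\lambda_0,\lambda)\le\frac{\rho(\lambda_0,z)+\rho(z,\lambda)}{1+\rho(\lambda_0,z)\rho(z,\lambda)}\le\min\Bigl(2\rho(z,\lambda),\tfrac45\Bigr).
\]
Since every factor of $|B_{\lambda_0}(\lambda_0)|$ lies in $(0,1]$, hypothesis \eqref{intN} yields $\prod_{\mathrm{near}}\rho(\lambda_0,\lambda)\ge e^{-H(\lambda_0)}$; because each near factor is at most $4/5$, the number $N$ of near zeros satisfies $N\le H(\lambda_0)/\log(5/4)$ — linear, not exponential, in $H(\lambda_0)$ — and moreover $\sum_{\mathrm{near}}\log\frac{1}{\rho(\lambda_0,\lambda)}\le H(\lambda_0)$. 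Hence
\[
\sum_{\mathrm{near}}\log\frac{1}{\rho(z,\lambda)}\le N\log 2+\sum_{\mathrm{near}}\log\frac{1}{\rho(\lambda_0,\lambda)}\le C\,H(\lambda_0)\le 3C\,H(z)
\]
by Harnack when $\rho(z,\lambda_0)\le 1/2$, while if $\rho(z,\Lambda)\ge 1/2$ all zeros are far and the Poisson bound alone suffices. Inserting this near-zero estimate into your outline produces a complete, self-contained proof; without it, the key step remains a citation to results you have not formulated, which is precisely the point that needed proof.
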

 
According to Proposition~\ref{incl} we only need to see that $X^{n-1}(\Lambda)\subset \N|\Lambda$. 
Let then $\omega(\Lambda)\in X^{n-1}(\Lambda)$ and split it
\[
 \{\omega(\lambda)\}_{\lambda\in\Lambda}= \{\omega_k^{(1)}\}_{k\in\mathbb N}\cup\dots\cup \{\omega_k^{(n)}\}_{k\in\mathbb N}\ ,
\]
where $\omega_k^{(j)}=\omega(\lambda_k^{(j)})$, $j=1,\dots,n$, $k\in\mathbb N$. By Lemma~\ref{inclusions} and the hypothesis $\{\omega_k^{(1)}\}_{k\in\mathbb N}\in X^0(\Lambda_1)$, hence there exists $f_1\in\N$ such that
\[
 f_1(\lambda_k^{(1)})=\omega_k^{(1)}\ ,\qquad k\in\mathbb N\ .
\]
In order to interpolate also the values $\{\omega_k^{(2)}\}_{k}$ consider functions of the form
\[
 f_2(z)=f_1(z)+B_1(z) g_2(z)\ .
\]
Immediately $f_2(\lambda_k^{(1)})=f_1(\lambda_k^{(1)})=\omega_k^{(1)}$, $k\in\mathbb N$, and we will have $f_2(\lambda_k^{(2)})=\omega_k^{(2)}$ as soon as we find $g_2\in\N$ such that
\[
 g_2(\lambda_k^{(2)})=\frac{\omega_k^{(2)}-f_1(\lambda_k^{(2)})}{B_1(\lambda_k^{(2)})}\ , k\in\N\ .
\]
Since $\Lambda_2\in\Int \N$ such $g_2$ will exist as soon as the sequence in the right hand side is majorized by a sequence of the form 
$\{e^{H(\lambda_k^{(2)})}\}_k$. 

Given $\lambda_k^{(2)}\in\Lambda_2$ pick $\lambda_k^{(1)}$ such that $\rho(\lambda_k^{(2)},\Lambda_1)=\rho(\lambda_k^{(2)},\lambda_k^{(1)})$. There is no restriction in assuming that $\rho(\lambda_k^{(2)},\lambda_k^{(1)})\leq 1/2$. Then, by Lemma~\ref{int-local} there exists $H_1\in \Har_+(\D)$ such that 
\[
 |B_1(\lambda_k^{(2)})|\ge e^{-H_1(\lambda_k^{(2)})}\rho(\lambda_k^{(1)},\lambda_k^{(2)})\qquad k\in\mathbb N .
\]
Now, since $f_1(\lambda_{k}^{(1)})=\omega_{k}^{(1)}$ we have
\begin{align*}
 \left|\frac{\omega_k^{(2)}-f_1(\lambda_k^{(2)})}{B_1(\lambda_k^{(2)})}\right| &\leq
 \left|\frac{\omega_k^{(2)}-\omega_{k}^{(1)}}{B_1(\lambda_k^{(2)})}\right|
 +\left|\frac{f_1(\lambda_{k}^{(1)})-f_1(\lambda_k^{(2)})}{B_1(\lambda_k^{(2)})}\right|\\
 &\leq \left(\Delta^1(\omega_{k}^{(1)},\omega_k^{(2)}) + \Delta^1( f_1(\lambda_{k}^{(1)}),f_1(\lambda_k^{(2)}))\right) e^{H_1(\lambda_k^{(2)})}\ .
\end{align*}
By hypothesis, and since $f_1 \in\N$, there exists $H_2\in\Har_+(\D)$ such that
\[
\Delta^1(\omega_{k}^{(1)},\omega_k^{(2)}) + \Delta^1( f_1(\lambda_{k}^{(1)}),f_1(\lambda_k^{(2)}))
 \leq e^{H_2(\lambda_{k}^{(1)})+H_2(\lambda_k^{(2)})},
\]
and therefore, by Harnack's inequalities,
\[
 \left|\frac{\omega_k^{(2)}-f_1(\lambda_k^{(2)})}{B_1(\lambda_k^{(2)})}\right|\leq e^{H_2(\lambda_{k}^{(1)}) +H_2(\lambda_{k}^{(2)})} e^{H_1(\lambda_{k}^{(2)}) }
\leq  e^{3(H_1+H_2)(\lambda_k^{(2)})} ,
\]
In general, assume that we have $f_{n-1}\in\N$ such that
\[
 f_{n-1}(\lambda_k^{(j)})=\omega_k^{(j)}\, \qquad k\in\mathbb N,\ j=1,\dots,n-1\ .
\]
We look for a function $f_n\in\N$ interpolating the whole $\Lambda$ of the form
\[
 f_n=f_{n-1}+B_1\cdots B_{n-1} g_n\ .
\]
We need then $g_n \in\N$ with
\[
 g_n(\lambda_{k}^{(n)})=\frac{\omega_{k}^{(n)}-f_{n-1}(\lambda_{k}^{(n)})}{B_1(\lambda_{k}^{(n)})\cdots B_{n-1}(\lambda_{k}^{(n)})},\qquad k\in\mathbb N\ .
\]
Let us see that the sequence of values in the right hand side of this identity have a majorant of the form $\{e^{H(\lambda_{k}^{(n)})}\}_k$.

Pick $\lambda_{k}^{(j)}\in\Lambda_j$, $j=1,\dots,n-1$ such that $\rho(\lambda_{k}^{(n)},\Lambda_j )=\rho(\lambda_{k}^{(n)},\lambda_{k}^{(j)})$. There is no restriction in assuming that $\rho(\lambda_{k}^{(n)},\lambda_{k}^{(j)})\leq 1/2$.
Since $f_{n-1}(\lambda_{k}^{(j)})=\omega_{k}^{(j)}$, $j=1,\dots,n-1$, an immediate computation shows that
\begin{multline*}
 \omega_{k}^{(n)}-f_{n-1}(\lambda_{k}^{(n)})=\left[\Delta^{n-1}(\omega_{k}^{(1)},\dots,\omega_{k}^{(n-1)},\omega_{k}^{(n)})- \right.\\
 \left. - \Delta^{n-1}(f_{n-1}(\lambda_{k}^{(1)}),\dots,f_{n-1}(\lambda_{k}^{(n-1)}), f_{n-1}(\lambda_{k}^{(n)}))\right]\, 
 b_{\lambda_{k}^{(1)}}(\lambda_{k}^{(n)})\cdots b_{\lambda_{k}^{(n-1)}}(\lambda_{k}^{(n)})\ .
\end{multline*}
Again by Lemma~\ref{int-local}, there exists $H_1\in\Har_+(\D)$ such that
\[
 |B_j(\lambda_{k}^{(n)})|\geq e^{-H_1(\lambda_{k}^{(n)})} \rho(\lambda_{k}^{(j)}, \lambda_{k}^{(n)})\ , k\in\mathbb N,\ j=1,\dots,n-1 .
\]
Hence, by hypothesis and the fact that $f_{n-1}\in\N$ there exists $H\in\Har_+(\D)$ such that
\begin{multline*}
 \left|\frac{\omega_{k}^{(n)}-f_{n-1}(\lambda_{k}^{(n)})}{B_1(\lambda_{k}^{(n)})\cdots B_{n-1}(\lambda_{k}^{(n)})}\right|\leq
 [| \Delta^{n-1}(\omega_{k}^{(1)},\dots,\omega_{k}^{(n)}) |
 + | \Delta^{n-1}(f_{n-1}(\lambda_{k}^{(1)}),\dots, f_{n-1}(\lambda_{k}^{(n)}))|]\, e^{(n-1) H_1(\lambda_{k}^{(n)})}\\
 \leq e^{H(\lambda_{k}^{(1)})+\cdots+ H(\lambda_{k}^{(n-1)})+H(\lambda_{k}^{(n)})+(n-1) H_1(\lambda_{k}^{(n)})}\ .
\end{multline*}

Finally, by Harnack's inequalities, this is bounded by $e^{2n (H(\lambda_{k}^{(n)})+ H_1(\lambda_{k}^{(n)}))}$.

\end{document}